\newcommand{\R}{\textnormal{I\kern-0.21emR}}
\newcommand{\N}{\textnormal{I\kern-0.21emN}}
\renewcommand{\geq}{\geqslant}
\renewcommand{\leq}{\leqslant}
\newtheorem{theorem}{Theorem}
\theoremstyle{definition}
\title{\bf Minimal cost-time strategies for population replacement using the IIT}
\author{Antoine Henrot\footnote{Universit\'e de Lorraine, CNRS, Institut Elie Cartan de Lorraine, BP 70239 54506 Vand\oe uvre-l\`es-Nancy Cedex, France ({\tt antoine.henrot@univ-lorraine.fr}).}
	\and Idriss Mazari\footnote{Sorbonne Universit\'es, UPMC Univ Paris 06, UMR 7598, Laboratoire Jacques-Louis Lions, F-75005, Paris, France (\texttt{idriss.mazari@upmc.fr}).}
	\and Yannick Privat\footnote{IRMA, Universit\'e de Strasbourg, CNRS UMR 7501, 7 rue Ren\'e Descartes, 67084 Strasbourg, France ({\tt yannick.privat@unistra.fr}).}
}
\author{Luis Almeida\footnote{Sorbonne Universit\'e, CNRS, Universit\'e de Paris, Inria, Laboratoire J.-L. Lions, 75005 Paris, France ({\tt luis.almeida@sorbonne-universite.fr}).}
	\and Jes\'us Bellver Arnau\footnote{Sorbonne Universit\'e, CNRS, Universit\'e de Paris, Inria, Laboratoire J.-L. Lions, 75005 Paris, France ({\tt bellver@ljll.math.upmc.fr}).}
	\and Michel Duprez\footnote{CEREMADE, Université Paris-Dauphine \& CNRS UMR  7534,Université PSL, 75016 Paris, France ({\tt duprez@ceremade.dauphine.fr}).}
	\and Yannick Privat\footnote{IRMA, Universit\'e de Strasbourg, CNRS UMR 7501, Inria, 7 rue Ren\'e Descartes, 67084 Strasbourg, France ({\tt yannick.privat@unistra.fr}).}
}
\date{}
\begin{document}

\maketitle

\begin{abstract}
Vector control plays a central role in the fight against vector-borne diseases and, in particular, arboviruses. The use of the endosymbiotic bacterium \textit{Wolbachia} has proven effective in preventing the transmission of some of these viruses between mosquitoes and humans, making it a promising control tool. The Incompatible Insect Technique (IIT) consists in replacing the wild population by a population carrying the aforementioned bacterium, thereby preventing outbreaks of the associated vector-borne diseases.
In this work, we consider a two species model incorporating both Wolbachia infected and wild mosquitoes. Our system can be controlled thanks to a term representing an artificial introduction of Wolbachia-infected mosquitoes. Under the assumption that the birth rate of mosquitoes is high, we may reduce the model to a simpler one on the proportion of infected mosquitoes. We investigate minimal cost-time strategies to achieve a population replacement both analytically and numerically for the simplified 1D model and only numerically for the full 2D system
\end{abstract}

\noindent\textbf{Keywords:} minimal time, optimal control, Wolbachia, ordinary  differential systems.

\medskip

\section{Introduction} 

Arboviruses are a major threat for human health throughout the world, being responsible for diseases such as Dengue, Zika, Chikungunya or Yellow fever \cite{dengue1, dengue2}. This has led to the development of increasingly sophisticated techniques to fight against these viruses, specially, techniques targeting the vector transmitting the diseases, i.e. the mosquito \cite{bliman_sit, anguelov_dumont, alphey_10, alphey_14}.
 Recently, there has been increasing interest in using the well-known bacterium Wolbachia which lives only inside insect cells \cite{wolbach}, as a tool for carrying out this vector-targeted control  \cite{bliman_iit, hughes_briton, schraiber, ndii_hickson, hoffman_montgomery, sallet}. 
This method is known as the ``Incompatible Insect Technique'' (IIT). Mosquitoes carrying this bacterium show a significant reduction in their vectorial capacity \cite{walker,moreira_iturbe, turley_moreira, mousson}. The fact that the bacterium is transmitted from the mother to offspring is a phenomenon called cytoplasmic incompatibility (CI) \cite{sinkins, CI}, which produces cross sterility between infected males and uninfected females. It is a promising tool for fighting against these diseases by performing a replacement of the mosquito population.

In this work, we focus on studying the minimal amount of mosquitoes needed to ensure an effective population replacement. We will first fix the time horizon over which we will be able to act and consider a cost functional modeling the population replacement strategy. Then, a convex combination of the time horizon and the previous cost functional will be considered to get an optimized cost-time strategy. To investigate this issue, let us introduce a model for two interacting mosquito populations: a Wolbachia-free population $n_1$, and a Wolbachia carrying one, $n_2$.
The resulting system, introduced in \cite{wolbachia}, reads
\begin{equation}\label{eq:fullsys}
\begin{cases}
& \frac{dn_1(t)}{dt}=b_1n_1(t)\left(1-s_h\frac{n_2(t)}{n_1(t)+n_2(t)}\right)\left(1-\frac{n_1(t)+n_2(t)}{K}\right)-d_1n_1(t),\\
& \frac{dn_2(t)}{dt}=b_2n_2(t)\left(1-\frac{n_1(t)+n_2(t)}{K}\right)-d_2n_2(t)+u(t) \mbox{ , } t>0,\\
& n_1(0)=n_1^0 \mbox{ , } n_2(0)=n_2^0,
\end{cases}
\end{equation}
where $u(\cdot)$ plays the role of a control function that will be made precise in the sequel.

The aim is to achieve a population replacement meaning that, starting from the equilibrium $\left(n_1^*,0\right)=\left(K\left(1-\frac{d_1}{b_1}\right),0\right)$, one wants to reach the equilibrium $\left(0,n_2^*\right)=\left(0, K\left(1-\frac{d_2}{b_2}\right)\right)$. In the system, the positive constant $K$ represents the carrying capacity of the environment, $d_i$ and $b_i$, $i=1,2$ denote respectively the death and birth rates of the mosquitoes and $s_h$ is the CI rate.

To act on the system, we will use the control function appearing in the second equation, representing the rate at which Wolbachia-infected mosquitoes are introduced into the population. We will impose the following biological constraint on the control function $u$: the rate at which we can instantaneously release the mosquitoes will be bounded above by $M$. Another natural biological constraint is to limit the total amount of mosquitoes we release up to time $T$, as done in \cite{control} and \cite{wolbachia}. In this article, we take a different approach, looking actually at minimizing the total number of mosquitoes used. We thus introduce the space of admissible controls 
\begin{equation}\label{def:UTM}
\mathcal{U}_{T,M}:=\left\{u\in L^{\infty}\left(\left[0,T\right]\right),0\leq u\leq M\mbox{ a.e. in }(0,T)\right\}.
\end{equation}

Before going further, in order to simplify the analytical study of the system, we will work on a reduction of the problem already used in \cite{wolbachia}. It is shown there that, under the hypothesis of having a high birth rate, i.e. considering $b_1=\frac{b_1^0}{\varepsilon}$, $b_2=\frac{b_2^0}{\varepsilon}$ and taking the limit $\varepsilon\to 0$, the proportion $\frac{n_2}{n_1+n_2}$ of Wolbachia-infected mosquitoes in the population uniformly converges to $p$, the solution of a simple scalar ODE, namely 
\begin{equation}\label{def:p1D}
\left\{\begin{array}{ll}
\frac{dp}{dt}(t)=f(p(t))+u(t)g(p(t)),& t>0\\
p(0)=0, & 
\end{array}\right.
\end{equation}
where $$f(p)=p(1-p)\frac{d_1b_2^0-d_2b_1^0(1-s_hp)}{b_1^0(1-p)(1-s_hp)+b_2^0p} \mbox{ and } g(p)=\frac{1}{K}\frac{b_1^0(1-p)(1-s_hp)}{b_1^0(1-p)(1-s_hp)+b_2^0p}.$$

We remark that in the absence of a control function, the mosquito proportion equation simplifies into $\frac{dp}{dt}=f(p)$. This system is bistable, with two stable equilibria at $p=0$ and $p=1$ and one unstable equilibrium at  $p=\theta=\frac{1}{s_h}\left(1-\frac{d_1 b_2^0}{d_2 b_1^0}\right)$, the only root of $f$ strictly between $0$ and $1$ assuming that 
\begin{equation}\label{apero1908}
1-s_h<\frac{d_1 b_2^0}{d_2 b_1^0}<1.
\end{equation}
In what follows, it will be useful to notice that the derivative of the function $f/g$ has a unique zero $p^*$ in $(0,\theta)$ defined by 
\begin{equation}\label{def:petoile}
p^*=\frac{1}{s_h}\left(1-\sqrt{\frac{d_1b_2^0}{d_2b_10}}\right).
\end{equation}
In \cite{wolbachia}, for the aforementioned system of two equations, the problem 
\begin{equation}\label{full:CP}
\inf\limits_{u\in\mathcal{U}_{T,C,M}}J(u), \quad\text{with }J(u)=\frac{1}{2}n_1(T)^2+\frac{1}{2}\left[(n_2^*-n_2(T))_+\right]^2
\end{equation}
and $\mathcal{U}_{T,C,M}=\left\{u\in\mathcal{U}_{T,M} , \int_0^Tu(t)dt\leq C\right\}$ is considered. Then, denoting by $J^{\varepsilon}(u)$ the criterion $J(u)$ where the birth rates $b_1$ and $b_2$ have been respectively replaced by $\frac{b_1^0}{\varepsilon}$ and $\frac{b_2^0}{\varepsilon}$, with $\varepsilon>0$, a $\Gamma$-convergence type result is proven. More precisely,  any solution of the reduced problem is close to the solutions of the original problem for the weak-star topology of $L^{\infty}(0,T)$ and moreover
 $$
 \lim_{\varepsilon\to 0}\inf_{u\in\mathcal{U}_{T,C,M}}J^{\varepsilon}(u)=\inf_{u\in\mathcal{U}_{T,C,M}}J^{0}(u) ,
 $$
where
\begin{equation}\label{def:J0}
J^{0}(u)=\lim\limits_{\varepsilon\to 0} J^{\varepsilon}(u)=K(1-p(T))^2
\end{equation}
and $p$ is the solution of \eqref{def:p1D} associated to the control function choice $u(\cdot)$.
The arguments exposed in \cite{wolbachia} can be adapted without effort to our problem, allowing us to investigate the minimization of $J^0$ given by \eqref{def:J0}, which is easier to study both analytically and numerically,  instead of the full problem  \eqref{full:CP}, since the solutions of both problems will be close in the sense of the $\Gamma$-convergence.

In accordance with the stability considerations above concerning system \eqref{def:p1D} without control, we will impose as final state constraint $p(T)=\theta$ since once we are above this state, the system will evolve by itself (with no need to control it any longer) to $p=1$, the state of total invasion. 
By analogy with the two-equation system, $\theta$ represents the threshold of the basin of attraction of the equilibrium $(0,n^*_2)$.
Since our goal in this work is to minimize the cost of our action on the system, we will address the issue of minimizing the total number mosquitoe releaseds, in other words, the integral over time of the rate at which mosquitoes are being released, namely 
\begin{equation}\label{def:J}
J(u)=\int_{0}^{T}u(s)\, ds.
\end{equation}
\section{Optimal control with a finite horizon of time} \label{sec:Tfix}

We first consider an optimal control problem where the time window $[0,T]$ in which we are going to act on the system is fixed. This leads us to deal with the optimal control problem
\begin{equation}
\label{prob:Tfixed}
\tag{$\mathcal{P}_{T,M}$}
 \begin{cases}
  \inf\limits_{u\in\mathcal{U}_{T,M}} J(u) , \\
 p'=f(p)+ug(p), \ p(0)=0 \ , \ p(T)=\theta \ ,
 \end{cases}    
\end{equation}
where $J(u)$ is defined by \eqref{def:J} and $\mathcal{U}_{T,M}$ is given by \eqref{def:UTM}.

\begin{theorem}
\label{theo:Tfixed}
Let us introduce
\begin{equation}\label{def:Tstarmstar}
m^*:=\max\limits_{p\in[0,\theta]}\left(-\frac{f(p)}{g(p)}\right)\quad \text{and}\quad 
T^*=\int_0^{\theta}\frac{d\nu}{f(\nu)+Mg(\nu)}
\mbox{ for }M>0.
\end{equation}
Let us assume that $M>m^*$, $T\geq T^*$ and that \eqref{apero1908} is true. Then there exists $u^*\in\mathcal{U}_{T,M}$ bang-bang solving problem \eqref{prob:Tfixed}. Furthermore, every function $u^*$ defined by $u^*_{\xi}=M\mathbbm{1}_{(0+\xi,T^*+\xi)}$, with $\xi\in[0,T-T^*]$ solves the problem, and $J(u_{\xi}^*)=MT^*$.
\end{theorem}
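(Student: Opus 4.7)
The plan is to establish the lower bound $J(u) \geq MT^*$ for every admissible control simultaneously with the construction of explicit optimizers that achieve this value, so that existence, bang-bang structure, and the announced one-parameter family all drop out of a single algebraic identity.

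First I would confine every admissible trajectory to $[0,\theta]$. Non-negativity is immediate from $p(0)=0$, $f(0)=0$, $u \geq 0$ and the non-negativity of $g$ on $[0,1]$. For the upper bound, hypothesis~\eqref{apero1908} yields $f(p) > 0$ on $(\theta,1)$, so if the trajectory ever exceeded $\theta$ the dynamics $\dot p = f(p) + u\,g(p)$ with $u,g \geq 0$ would force $p$ to keep strictly increasing, contradicting the terminal condition $p(T) = \theta$. Consequently $p(t) \in [0,\theta]$ throughout, so that $f(p(t)) \leq 0$ everywhere and $f(p(t)) + Mg(p(t)) > 0$ by definition of $m^*$ together with $M > m^*$.

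The heart of the argument is the auxiliary function
\[
\Psi(p) \;:=\; M \int_0^p \frac{d\nu}{f(\nu) + Mg(\nu)}, \qquad p \in [0,\theta],
\]
which is well defined and $C^1$ by the previous step and satisfies $\Psi(\theta) = MT^*$. Differentiating $\Psi(p(t))$ along an arbitrary admissible trajectory and integrating over $[0,T]$ gives
\[
MT^* \;=\; \int_0^T \frac{M\bigl(f(p(t)) + u(t)\,g(p(t))\bigr)}{f(p(t)) + Mg(p(t))}\, dt,
\]
and subtracting this from $J(u) = \int_0^T u(t)\,dt$ leads, after straightforward algebra, to the identity
\[
J(u) - MT^* \;=\; \int_0^T \frac{\bigl(M - u(t)\bigr)\bigl(-f(p(t))\bigr)}{f(p(t)) + Mg(p(t))}\, dt.
\]
All three factors of the integrand are non-negative by the sign analysis of Step~1 and the admissibility constraint $u \leq M$, so $J(u) \geq MT^*$ for every $u \in \mathcal{U}_{T,M}$ satisfying $p(T)=\theta$.

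It remains to realize this bound. For any $\xi \in [0, T-T^*]$ (non-empty because $T \geq T^*$), the control $u^*_\xi = M\mathbbm{1}_{(\xi,\xi+T^*)}$ produces a trajectory that stays at $0$ on $[0,\xi]$ (as $f(0)=0$), rises strictly from $0$ to $\theta$ in time exactly $T^*$ on $[\xi,\xi+T^*]$ (by the very definition of $T^*$ and the positivity of $f+Mg$), and remains at $\theta$ on $[\xi+T^*,T]$ (as $f(\theta)=0$). A direct computation yields $J(u^*_\xi) = MT^*$, matching the lower bound, so each $u^*_\xi$ is admissible, bang-bang, and optimal for~\eqref{prob:Tfixed}. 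The step I expect to be the main obstacle is the derivation of the identity for $J(u) - MT^*$: a naive change of variable $t \mapsto p(t)$ is only legitimate when $p$ is monotone, whereas the chain-rule identity for $\Psi(p(t))$ is valid for every absolutely continuous trajectory and therefore bypasses any decomposition of possibly non-monotone trajectories; a small routine check that $g > 0$ on $[0,\theta]$, so that all denominators make sense, completes the argument.
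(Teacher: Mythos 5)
Your proposal is correct, and it takes a genuinely different route from the paper. The paper proceeds in two stages: existence of a minimizer via a compactness/weak-$*$ argument (Appendix \ref{app:existT}), followed by the Pontryagin Maximum Principle to show that any minimizer is bang-bang with $\{u^*=M\}$ a single interval of length $T^*$, from which the optimal value $MT^*$ and the family $u^*_\xi$ are read off. You instead prove the sharp lower bound $J(u)\geq MT^*$ directly for every admissible control, via the exact identity
$$
J(u)-MT^*=\int_0^T\frac{\bigl(M-u(t)\bigr)\bigl(-f(p(t))\bigr)}{f(p(t))+Mg(p(t))}\,dt\geq 0,
$$
obtained by integrating $\frac{d}{dt}\Psi(p(t))$ with $\Psi(p)=M\int_0^p\frac{d\nu}{f(\nu)+Mg(\nu)}$; I checked the algebra and the sign analysis ($f\leq 0$ and $f+Mg>0$ on $[0,\theta]$ under \eqref{apero1908} and $M>m^*$, confinement of the trajectory to $[0,\theta]$ forced by $p(T)=\theta$ and $f>0$ on $(\theta,1)$), and they are sound. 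Exhibiting the controls $u^*_\xi$ that attain the bound then settles existence, optimality and the value $MT^*$ in one stroke, so no separate compactness argument and no PMP are needed. What your approach buys is economy and, as a bonus, a clean characterization of \emph{all} minimizers (equality holds iff $u=M$ a.e.\ on $\{0<p<\theta\}$), which recovers the bang-bang and single-interval structure as a corollary rather than as the main effort; what the paper's PMP route buys is a template that transfers to the free-horizon problem \eqref{prob:Tfree} and to situations where no such exact first integral is available. The only point to make fully explicit in a write-up is the chain rule for $\Psi\circ p$ with $p\in W^{1,\infty}(0,T)$ and $\Psi\in C^1([0,\theta])$, which you correctly note bypasses any monotonicity assumption on $p$.
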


\begin{proof}
Observe first that $T^*$ is constructed to be the exact time such that $p(T)=\theta$ whenever one takes the ``maximal'' control equal to $M$ on $(0,T^*)$. For this reason, if $T=T^*$ the set of admissible controls reduces to a singleton and we will assume from now on that $T>T^*$.

For the sake of readability, the proof of the existence of solutions is going to be treated separately in Appendix \ref{app:existT}. We focus here on deriving and exploiting the necessary optimality conditions. To this aim, let $u^*$ be a solution of Problem~\eqref{prob:Tfixed}.
To apply the Pontryagin's Maximum Principle (PMP), let us introduce ${U}=[0,M]$, as well as the Hamiltonian $\mathcal H$ of the system, given by
$$
\mathcal H:\R_+\times \R\times \R\times \{0,-1\}\times U\ni (t,p,q,q^0,u)\mapsto q(f(p)+ug(p)) + q^0 u .
$$
The equation $q$ obeys reads
$$
q'(t)=-\frac{\partial \mathcal H}{\partial p}=-q(f'(p)+ug'(p)), \quad t\in (0,T) 
$$
so that $q(t)=q(0)e^{-\int_0^t f'(p(s))+u(s)g'(p(s))ds}$, and thus $q(t)$ has a constant sign.

The instantaneous maximization condition reads
\begin{eqnarray}\label{metz16H31}
u^*(t)&\in&\arg\max\limits_{v \in U}\mathcal H(t,p,q,q^0,v)
=\arg\max\limits_{v \in U}\left(qg(p)+q^0\right)v.
\end{eqnarray}
%
%
This condition implies that $q$ is positive in $(0,T)$. Indeed, assume by contradiction that $q\leq 0$ in $(0,T)$. Since $q$ has a constant sign, since $q^0\in\{0,1\}$, and since the pair $(q,q^0)$ is nontrivial according to the Pontryagin Maximum Principle, there are two possibilities: either $q^0=-1$ and $q\leq 0$ in $(0,T)$ or $q^0=0$ and $q< 0$ in $(0,T)$. In both cases, the optimality condition yields that $u^*(t)=0$ for all $t\in[0,T]$, which is in contradiction with the condition $p(T)=\theta$. We thus get that $q(0)>0$ or similarly that $q(t)>0$ in $(0,T)$. 

Now, let us show that $q^0=-1$. To this aim, let us assume by contradiction that $q^0=0$. Hence, the optimality condition reads
$u^*(t)\in\arg\max\limits_{v \in U}qg(p)v$,
and since both $q$ and $g\circ p$ are positive in $[0,T]$, then one has $u^*(t)=M\mathbbm{1}_{[0,T]}$. But this control function verifies the final state constraint $p(T)=\theta$ if and only if $T=T^*$. 
We have thus reached a contradiction if $T>T^*$. We have therefore obtained that $q^0=-1$.

Let us introduce the function $w$ given by $w(t)=q(t)g(p(t))$. The maximization condition \eqref{metz16H31} yields
\begin{equation*}
     \begin{cases}
      w(t)\leq 1 \mbox{ on } \{u^*=0\} ,  \\
      w(t)=1 \mbox{ on } \{0<u^*<M\} , \\
      w(t)\geq 1 \mbox{ on } \{u^*=M\} .\\
     \end{cases}    
\end{equation*}
Let us finally prove that any solution is bang-bang. The following approach is mainly inspired from \cite[Proof of Lemma~7]{wolbachia}. For the sake of readability, we recall the main steps and refer to this reference for further details. 

Note that \begin{eqnarray*}
w'(t)&=&q'(t)g(p(t))+q(t)g'(p(t))p'(t)\\
&=&-q(t)\left(f'(p(t))+u(t)g'(p(t))\right)g(p(t))\\
&& +q(t)g'(p(t))\left(f(p(t))+u(t)g(p(t))\right)\\
&=&q(t)\left(-f'(p(t))g(p(t))+f(p(t))g'(p(t))\right)\\
&=&q(t)g(p(t))^2\left(-\frac{f}{g}\right)'(p(t)).
\end{eqnarray*}
Looking at the monotonicity of $p\mapsto -f(p)/g(p)$, we deduce that $w$ is increasing if $0<p(t)<p^*$ according to \eqref{apero1908}, and decreasing if $p^*<p(t)<\theta$, where $p^*$ is defined by \eqref{def:petoile}. Recall that one has $0<p^*<\theta$ according to \eqref{apero1908}.

To prove that $u^*$ is bang-bang, we show that $w$ cannot be constant on a measurable set of positive measure.
By contradiction, assume that $w$ is constant on a measurable set $I$ of positive measure. Necessarily, we also have $\left(-f/g\right)'(p(t))=0$ on $I$ which implies that $p(t)=p^*$ on $I$. This implies that for a.e. $t\in I$, $p(t)$ is constant. In order to get that, we must have, $p'=0$ on that set\footnote{Recall that if a function $F$ in $H^1(0,T)$ is constant on a measurable subset of positive Lebesgue measure, then its derivative is equal to 0, a.e. on $I$, see e.g. \cite[Lemma 3.1.8]{HenrPierre}}. At this step, one has
$$
\{0<u^*<M\}\subset \{t\in (0,T)\mid p(t)=p^*\}\subset \{t\in (0,T)\mid u^*(t)=-f(p^*)/g(p^*)\}.
$$
Since $M>m^*$, one has $-f(p^*)/g(p^*)\in (0,M)$ which shows that the converse inclusion is true, and therefore,
$$
\{0<u^*<M\}= \{t\in (0,T)\mid u^*(t)=-f(p^*)/g(p^*)\}.
$$
Using that $\{w=1\}\subset \{p=p^*\}$, one gets 
$$
\{0<u^*<M\}=\{w=1\}, \quad \{u^*=M\}=\{w>1\}, \quad \{u^*=0\}=\{w<1\}
$$
and in particular, $\{u^*=M\}$ and $\{u^*=0\}$ are open sets.

Let $I$ be a maximal interval on which $p$ is equal to $p^*$. On $I$, we have  $u^*(t) =-f(p^*)/g(p^*)$. If, at the end of the interval, we have $u = 0$ then $p$ must decrease since $p^* < \theta$, and therefore $\left(-f/g\right)'(p(t))>0$ so that $w$ increases. But this is in contradiction with the necessary
optimality conditions (on $\{u^*=0\}$, one has $w\leq 1$). If at the end of the interval we have $u=M$ then $p$ increases and $w$ decrease, leading again to a contradiction. Hence we must have $|I|=0$ or $I =[0,T]$. Since $p(t)=p^*$ on $I$, $p(0)=0$ and $p(T)=\theta$, this is impossible.

Therefore, $u^*$ is equal to $0$ or $M$ almost everywhere, meaning that it is bang-bang.

Finally, let us prove that the set $\{u^*=M\}$ is a single interval. We argue by contradiction. Using the fact that the solution is bang-bang, $M>m^*$ and that $\{u^*=M\}$ is open, if $\{u^*=M\}$ is not one open interval, there exists $(t_1,t_2)$ on which $u^*=0$ and $p'<0$. Since the final state is fixed, $p(T)=\theta$, then there necessarily exists a time $t_3>t_2$ such that $p(t_3)=p(t_1)$. Let us define $\tilde{u}$ as
$$
\tilde{u}(t)=\begin{cases}
 0 & t\in  (0,t_3-t_1), \\
 u^*(t-t_3+t_1) & t\in (t_3-t_1,t_3),\\
 u^*(t) &t\in  (t_3,T).
\end{cases}$$
We can easily check that since $u^*\in\mathcal{U}_{T,M}$, then $\tilde{u}\in\mathcal{U}_{T,M}$ and if $p$ defined as the solution to
 $$
 \left\{\begin{array}{ll}
 p'(t)=f(p(t))+u^*(t)g(p(t)), & t\in [0,T]\\
 p(0)=0 & 
 \end{array}\right.
$$
satisfies $p(T)=\theta$, then the function $\tilde p$ defined as the solution to
 $$
 \left\{\begin{array}{ll}
 \tilde p'(t)=f(\tilde p(t))+\tilde{u}(t)g(\tilde p(t)), & t\in 
[0,T]\\
 \tilde p(0)=0 & 
 \end{array}\right.
$$ 
satisfies $\tilde p(T)=\theta$ and is moreover non-decreasing.
Now performing a direct comparison between the cost of both controls we obtain
\begin{eqnarray*}
J(u^*)-J(\tilde{u})&=& \int\limits_0^T u^*(t)dt-\int\limits_0^T \tilde{u}(t)dt
= \int_{t_2}^{t_3}u^*(t)\, dt>0,
\end{eqnarray*}
which contradicts the optimality of $u^*$.
Therefore,  $\{u^*=M\}$ is a single interval and it follows that 
$$
\{u^*=0\}=\{p\in\{0,\theta\}\}=\{p=0\}\cup \{p=\theta\} .
$$
We conclude that all the mosquitoes are released in a single interval such that $|\{u^*=M\}|=T^*$, and thus the set $\{u^*=0\}$ splits into two intervals verifying $|\{u^*=0\}|=|\{p=0\}|+|\{p=\theta\}|=T-T^*$.

Using that for all $\xi\in[0,T-T^*]$, $u_{\xi}^*=M\mathbbm{1}_{(0+\xi,T^*+\xi)}$ satisfies this property, we get that $J(u_{\xi}^*)=MT^*$ and there are infinitely many solutions to Problem~\eqref{prob:Tfixed}.
\end{proof}

\section{Optimal control with a free horizon of time} 
In the previous section, we obtained infinitely many solutions for problem \eqref{prob:Tfixed}, showing that the system presents a natural time in which to act upon it, i.e. $T^*$. This is supported by the fact that we need to assume $T\geq T^*$ in order to have existence of solutions and  by the fact that in case $T>T^*$, then there exists one or two time interval of size $T-T^*>0$ in which $u^*(t)=0$. This motivates the introduction of a new problem, in which the final time $T$ is free. The functional we are interested in minimizing in this section is a convex combination of the cost $J$ used in the previous section, and the final time: 
\begin{equation}
\label{prob:Tfree}
\tag{$\mathcal{P}_{M}^{\alpha}$}
 \begin{cases}
\displaystyle   \inf_{\substack{u\in\mathcal{U}_{T,M}\\ T>0}} J_\alpha (T,u),\\
 p'=f(p)+ug(p), \ p(0)=0 \ , \ p(T)=\theta ,
 \end{cases}    
\end{equation}
where $\mathcal{U}_{T,M}$ is given by \eqref{def:UTM}, $\alpha\in[0,1]$ and
$$
J_\alpha (T,u)=(1-\alpha)\int_0^T u(s)ds + \alpha T .
$$
We expect to see the intervals where $u=0$ disappear. Let us state the main result of this section.

\begin{theorem}
\label{theo:Tfree}
Let $M>m^*$ and $\alpha>0$. Then, the unique solution of Problem~\eqref{prob:Tfree} is $u^*(t)=M\mathbbm{1}_{[0,T^*]}$ and $T^*$ given by \eqref{def:Tstarmstar}. Thus the optimal value is $J(u^*)=T^*\left((1-\alpha)M+\alpha\right)$.
\end{theorem}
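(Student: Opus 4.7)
My plan is to reduce the free-time problem \eqref{prob:Tfree} to the already-solved fixed-time problem \eqref{prob:Tfixed} by minimizing in $T$ second. For each fixed $T>0$, let
$$
V(T)=\inf_{u\in\mathcal{U}_{T,M},\, p(T)=\theta}\int_0^T u(s)\,ds,
$$
so that Problem \eqref{prob:Tfree} amounts to minimizing $(1-\alpha)V(T)+\alpha T$ over $T>0$. Theorem~\ref{theo:Tfixed} determines $V(T)$ entirely: since the maximal control $u\equiv M$ drives $p$ from $0$ to $\theta$ in exactly time $T^*$, the constraint set is empty for $T<T^*$, reduces to the singleton $\{M\mathbbm{1}_{[0,T^*]}\}$ for $T=T^*$, and for $T>T^*$ every minimizer has the form $M\mathbbm{1}_{(\xi,\xi+T^*)}$ with optimal cost $MT^*$. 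In every feasible case one therefore has $V(T)=MT^*$.

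Given this, the forward step is immediate: for every admissible pair $(T,u)$ with $T\geq T^*$,
$$
J_\alpha(T,u)=(1-\alpha)\int_0^T u(s)\,ds+\alpha T\geq (1-\alpha)MT^*+\alpha T\geq (1-\alpha)MT^*+\alpha T^*,
$$
where both inequalities use $\alpha>0$ and $(1-\alpha)\geq 0$ to keep the monotonicity in the expected direction. The lower bound is attained by $T=T^*$ and $u^*=M\mathbbm{1}_{[0,T^*]}$, so this pair is optimal and the value equals $T^*\bigl((1-\alpha)M+\alpha\bigr)$.

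The only remaining point is uniqueness. If $(T,u)$ is optimal with $T>T^*$, the second inequality above becomes strict because $\alpha>0$, contradicting optimality; hence any minimizer must satisfy $T=T^*$. But at $T=T^*$, as noted in the opening remark of the proof of Theorem~\ref{theo:Tfixed}, the terminal condition $p(T^*)=\theta$ forces $u=M$ almost everywhere on $[0,T^*]$, so the minimizer is unique. The only step that requires a little care is justifying $V(T)=MT^*$ for every $T\geq T^*$, but this is precisely the content of Theorem~\ref{theo:Tfixed}, so no genuine new obstacle arises; the result is a clean corollary of the fixed-horizon analysis combined with the strict monotonicity of $T\mapsto \alpha T$.
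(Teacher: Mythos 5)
Your proof is correct, but it follows a genuinely different route from the paper. You decompose \eqref{prob:Tfree} into an inner minimization over $u$ at fixed horizon --- whose value $V(T)=MT^*$ for $T\geq T^*$ (and infeasibility for $T<T^*$) is exactly the content of Theorem~\ref{theo:Tfixed} --- followed by an elementary one-dimensional minimization of $(1-\alpha)MT^*+\alpha T$ over $T\geq T^*$, with uniqueness coming from strict monotonicity in $T$ (since $\alpha>0$) plus the fact that at $T=T^*$ the admissible set is the singleton $\{M\mathbbm{1}_{[0,T^*]}\}$. The paper instead reruns the Pontryagin Maximum Principle in the free-final-time setting: it uses the transversality condition $\mathcal H(T)= -q^0\alpha$ to exclude the abnormal case $q^0=0$ and to force $u^*(T)=M$, repeats the bang-bang and single-interval arguments of Theorem~\ref{theo:Tfixed}, and then selects $\xi=0$ by comparing costs. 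Your approach is shorter and more robust: it treats $\alpha\in(0,1]$ uniformly (the paper handles $\alpha=1$ separately), it makes existence automatic since the global lower bound $J_\alpha(T,u)\geq T^*\bigl((1-\alpha)M+\alpha\bigr)$ is attained (so the separate existence argument of Appendix~B is not needed), and it cleanly isolates where $\alpha>0$ is used. What it does not re-derive is the structural information the PMP provides directly (sign of the adjoint, switching function), but none of that is needed for the statement. The only facts you import beyond the literal statement of Theorem~\ref{theo:Tfixed} are the infeasibility for $T<T^*$ and the singleton claim at $T=T^*$; both follow from the comparison $p'\leq f(p)+Mg(p)$ with $g>0$ on $[0,\theta]$, and both are asserted in the opening of the paper's proof of Theorem~\ref{theo:Tfixed}, so no genuine gap remains.
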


\begin{proof}
Existence is investigated separately in Appendix \ref{app:existT}. Here, we focus on the characterization of the solution using first-order optimality conditions.
We begin by excluding the case $\alpha=1$, because the solution is trivial in this case. The problem reads
\begin{equation}
\tag{$\mathcal{P}_{T,M}^{1}$}
 \begin{cases}
  \inf\limits_{\substack{u\in\mathcal{U}_{T,C,M}\\ T>0}} T,\\
 p'=f(p)+ug(p), \ p(0)=0 \ , \ p(T)=\theta 
 \end{cases}    
\end{equation}
and the solution is clearly the constant function $u^*=M$. Indeed, it is easy to prove that, if the set $\{u^*<M\}$ has a positive measure, then one decreases the minimal time to reach $\theta$ by increasing $u^*$. On top, by the reasoning carried out in Theorem~\ref{theo:Tfixed}, we know that $T^*$ is the time that it takes for the system 
$$
\begin{cases}
&  p_M'=f(p_M)+Mg(p_M) ,\\
& p_M(0)=0 ,
\end{cases}
$$
to reach the point $p_M(T^*)=\theta$. The conclusion follows.

Now, let us assume that $\alpha<1$ and let $(T,u^*)$ denote an optimal pair. To apply the Pontryagin Maximum Principle (PMP), we need to define the Hamiltonian of the system
$$
\mathcal H: \R_+\times \R\times \R\times \{0,-1\}\times U\ni(t,p,q,q^0,u)=q(f(p)+ug(p)) + q^0 (1-\alpha) u .
$$
The equation that $q$ obeys reads
$$
 q'=-\frac{\partial \mathcal H}{\partial p}(t,p,q,q^0,u^*)=-q(f'(p)+u^*g'(p)) 
$$
and therefore, $q$ has a constant sign.

The instantaneous maximization condition reads
\begin{eqnarray*}
u^*(t)&\in&\arg\max\limits_{v \in U}\mathcal H(t,p,q,q^0,v)=\arg\max\limits_{v \in U}\left(w(t)+q^0(1-\alpha)\right)v,
\end{eqnarray*}
where $w(t)=q(t)g(p(t))$.

Thanks to the same reasoning done in Theorem~\ref{theo:Tfixed}, we get similarly that $q$ is positive.
Since the final time is free, we have the extra condition $u^*(T)=v_T$, where $v_T$ solves the one-dimensional optimization problem
$$
\max\limits_{ v_T \in U}\mathcal H(T,\theta,q(T),q^0,v_T)=\max\limits_{ v_T \in U} (w(T) + q^0(1-\alpha))v_T=-q^0\alpha .
$$
This condition rules out the case $q^0=0$ because, if we assume $q^0=0$, then we have $\max\limits_{0\leq v_T \leq M} w(T)v_T=0$, and since $w(t)>0$ then we must have at the same time $v_T=M$ and $w(T)M=0$, leading to a contradiction. Therefore $q^0=-1$, and we infer that the first order optimality conditions imply
\begin{equation}\label{metz:1000}
     \begin{cases}
      w(t)\leq 1-\alpha \mbox{ on } \{u^*=0\} ,\\
      w(t)=1-\alpha \mbox{ on } \{0<u^*<M\} , \\
      w(t)\geq 1-\alpha \mbox{ on } \{u^*=M\} , \\
      \max\limits_{0\leq v_T \leq M} (w(T) -(1-\alpha))v_T=\alpha .
     \end{cases}    
\end{equation}
Recall that $w$ is increasing if $p(t)\in(0,p^*)$  and decreasing if $p(t)\in(p^*,\theta)$, where $p^*$ is given by \eqref{def:petoile}.
Mimicking the reasoning done in proof of Theorem~\ref{theo:Tfixed} (in the case where $T$ is fixed), one can asserts that $u^*$ is bang-bang and that the set $\{u^*=M\}$ is one single open interval. Since $\alpha>0$, the last condition of \eqref{metz:1000} yields $u^*(T)>0$, so $u^*(T)=M$. This, together with the fact that the time it takes the system to reach $p=\theta$ at speed $M$ is $T^*$ allows us to conclude that solutions must be of the form $u^*(t)=M\mathbbm{1}_{[\xi,\xi+T^*]}$, with $\xi\geq 0$. Indeed, once the function has switched to $u^*=M$ it can not switch back to $u^*=0$.  Then, looking at the functional we want to minimize, we conclude that $\xi=0$, since the cost term is independent of $\xi$ and the term $\alpha T$ is increasing with respect to  $\xi$. As a result, the (unique) solution is $u^*=M\mathbbm{1}_{[0,T^*]}$.
\end{proof}

\noindent \textbf{Remark:} 
if we set $\alpha=0$ in problem \eqref{prob:Tfree}, we recover problem $\eqref{prob:Tfixed}$ but without a restriction on the final time. This allows $T$ going to infinity and explains that all the pairs of the form $(u_T,T)$ where
$$
T >T^*\quad \text{and}\quad u_T(t)=M\mathbbm{1}_{[\xi,\xi+T^*]},
$$
and $\xi\in [0,T-T^*)$, solve this problem. Once the system has reached the final state $p(T)=\theta$ it can stay there indefinitely without using any mosquitoes (i.e. $u=0$).
It is not so realistic from a practical point of view, and justifies that to fix $T$ to avoid the emergence of such noncompact families of solutions.

\section{Numerical simulations}

This section is devoted to some numerical simulations. We will use the Python package  GEKKO (see \cite{gekko}) which solves among other things optimal control problems under large-scale  differential equations thanks to nonlinear programming solvers.

This section is devoted to some numerical simulations. We will use the Python package  GEKKO (see \cite{gekko}) which solves among other things optimal control problems under large-scale  differential equations thanks to nonlinear programming solvers. 

A particular attention has been paid to the development of a user-friendly source code, which is available online at
\begin{center}
\url{https://github.com/jesusbellver/Minimal-cost-time-strategies-for-population-replacement-using-the-IIT}
\end{center}
with the hope that it serve as a useful basis for further investigations.

\subsection{1D Case}

Hereafter, we provide some simulations for the reduced problems \eqref{prob:Tfixed} and \eqref{prob:Tfree}, that can be seen as numerical confirmations of the theoretical results stated in Theorems~\ref{theo:Tfixed} and \ref{theo:Tfree}. The parameters considered for these simulations are given in Table~\ref{tab:1D}, according to the biological parameters considered in \cite{wolbachia}. 

\begin{table}
    \centering
    \caption{Simulation parameter values considered in \eqref{prob:Tfixed} and \eqref{prob:Tfree}}
    \begin{tabular}{|c|c|c|c|}
        Category & Parameter & Name & Value \\
        \hline 
        \multirow{2}{*}{Optimization} &  $T$ & Final time & 0.5 \\
        \cline{2-4}
        &  $M$ & Maximal release number & 10 \\
        \hline
        \multirow{5}{*}{Biology}&  $b_1^0$ & Normalized wild birth rate & 1 \\
        \cline{2-4}
         & $b_2^0$ & Normalized infected birth rate & 0.9 \\
         \cline{2-4}
         & $d_1^0$ & Wild death rate & 0.27 \\ 
         \cline{2-4}
         & $d_2^0$ & Infected death rate & 0.3 \\
         \cline{2-4}
         & $K$ & Normalized carrying capacity & 1 \\
         \cline{2-4}
         & $s_h$ & Cytoplasmatic incompatibility level & 0.9 \\
    \end{tabular}
    \label{tab:1D}
\end{table}

Simulations for Problem~\eqref{prob:Tfixed} are provided on Fig.~\ref{fig:1DTfixed}. To deal with the constraint $p(T) = \theta$, we added a penalization term in the definition of the functional.
As expected, we recover on Fig.~\ref{fig:1DTfixed} that the optimal control is bang-bang and that all mosquitoes are only released for an interval of time. Although other optimal solutions may exist, we obtain a particular one, where the action is concentrated at the beginning of the total time interval $[0,T]$.

\begin{figure}[h]
\centering
\includegraphics[width=\textwidth]{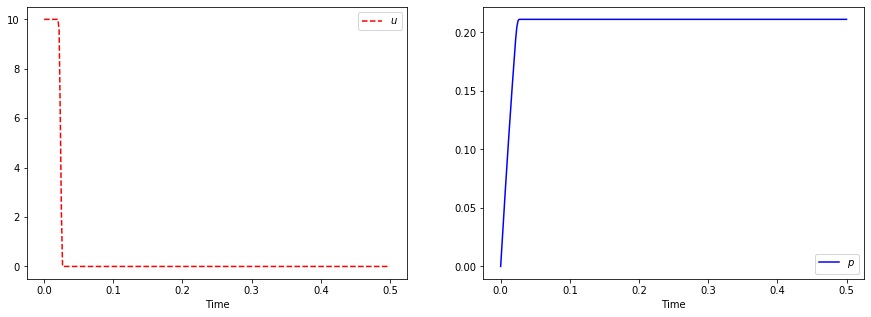}
    \caption{Simulation of the reduced problem \eqref{prob:Tfixed}. The penalization parameter is $\varepsilon=0.01$ and the number of elements in the ODEs discretization is equal to $300$.}
    \label{fig:1DTfixed}
\end{figure}

Simulations for Problem~\eqref{prob:Tfree} are provided on Fig.~\ref{fig:1DTfree}.
As proven in Theorem~\ref{theo:Tfree}, the effect of letting the final time $T$ free and adding it with a weight to the functional we want to minimize leads to the absence of an interval during which no action is taken, for every $\alpha\in (0,1]$. On Fig.~\ref{fig:1DTfree}, numerical solutions are plotted for $\alpha=0.01$. The final time obtained is $T^*_{\rm num}\approx 0.0238519$, which is very closed to the expected theoretical one 
$$
T^*=\int_0^{\theta}\frac{d\nu}{f(\nu)+Mg(\nu)}\approx 0.0238122.
$$
Results for other values of $\alpha$ are similar.

\begin{figure}[h]
\centering
\includegraphics[width=\textwidth]{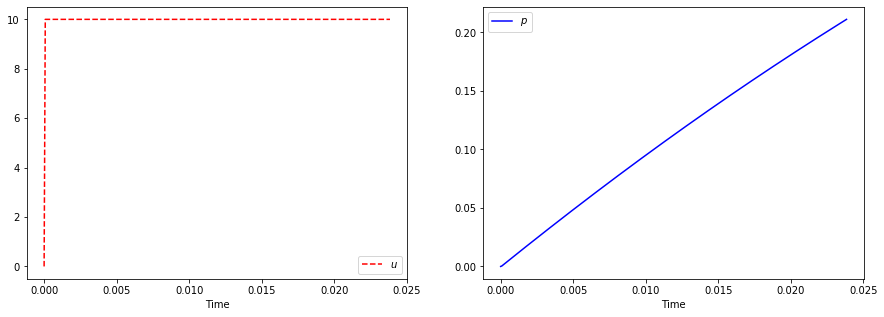}
    \caption{Simulation of the reduced problem \eqref{prob:Tfree} with $\alpha=0.01$. The number of elements in the ODEs discretization is equal to $300$.}
    \label{fig:1DTfree}
\end{figure}

\subsection{2D Case}

In this section, we provide simulations for optimal control problems involving the full system \eqref{eq:fullsys}. We will use the parameters of Table~\ref{tab:2D}, where the biological parameters have been extracted from \cite{bossin}. We choose $M$ to be ten times higher than the birth rate of wild mosquitoes, in analogy with the simulations for the reduced problem.

\begin{table}
    \centering
    \caption{Simulation parameters value for the optimal control problems involving the full system \eqref{eq:fullsys}}
    \begin{tabular}{|c|c|c|c|}
        Category & Parameter & Name & Value \\
        \hline
        Optimization & $M$ & Maximal release number & 112\\
        \hline
        \multirow{5}{*}{Biology}&  $b_1$ & Wild birth rate & 11.2 \\
        \cline{2-4}
         & $b_2$ & Infected birth rate & 10.1 \\
         \cline{2-4}
         & $d_1$ & Wild death rate & 0.04 \\ 
         \cline{2-4}
         & $d_2$ & Infected death rate & 0.044 \\
         \cline{2-4}
         & $K$ & Carrying capacity & 5124 \\
         \cline{2-4}
         & $s_h$ & Cytoplasmatic incompatibility level & 0.9 \\
    \end{tabular}
    \label{tab:2D}
\end{table}

To compute the carrying capacity $K$, we used the same procedure as in \cite{bossin}, but adapting it to our model. We will make our results relevant for an island of $74$~ha with a mosquito density of $69$~ha$^{-1}$, so the amount of wild mosquitoes at the equilibrium is $n_1^*=74\times 69 = 5106$. Then, since $n_1^*=K\left(1-\frac{d_1}{b_1}\right)$, we obtain the following carrying capacity of the environment 
$$
K=\frac{n_1^*}{1-\frac{d_1}{b_1}}\approx 5124.3011.
$$

We first deal with the case where $T$ is fixed, in other words, we solve the optimal control problem 
\begin{equation}\label{full:CPbis}
\inf\limits_{u\in\mathcal{U}_{T,M}} \int_{0}^{T}u(s)\, ds+\frac{1}{\varepsilon}
\max\left\{n_1(T)-10,n_2^*-10-n_2(T),0\right\},
\end{equation}
where $\Vert\cdot \Vert_{\R^2}$ stands for the Euclidean norm in $\R^2$.
The minimized criterion is a combination of the total amount of mosquitoes used and a penalization term standing for the final distance to 
to the region $[0,10]\times[n_2^*-10,n_2^*]$, with $\varepsilon=0.0001$. 

It is easy to show that, because of the pointwise constraints on the control function $u(\cdot)$, the steady-state $(0,n_2^*)$ of System~\eqref{eq:fullsys} cannot be reached in time $T$. This is why we chose to penalize the final distance to an arbitrary region that is clearly included in the basin of attraction of the steady-state $(0,n_2^*)$ but is reachable.

The simulations are performed for different final times, and results are given on Fig.~\ref{fig:2DTfixed}.

\begin{figure}[h!]
    \centering
    \includegraphics[width=\textwidth]{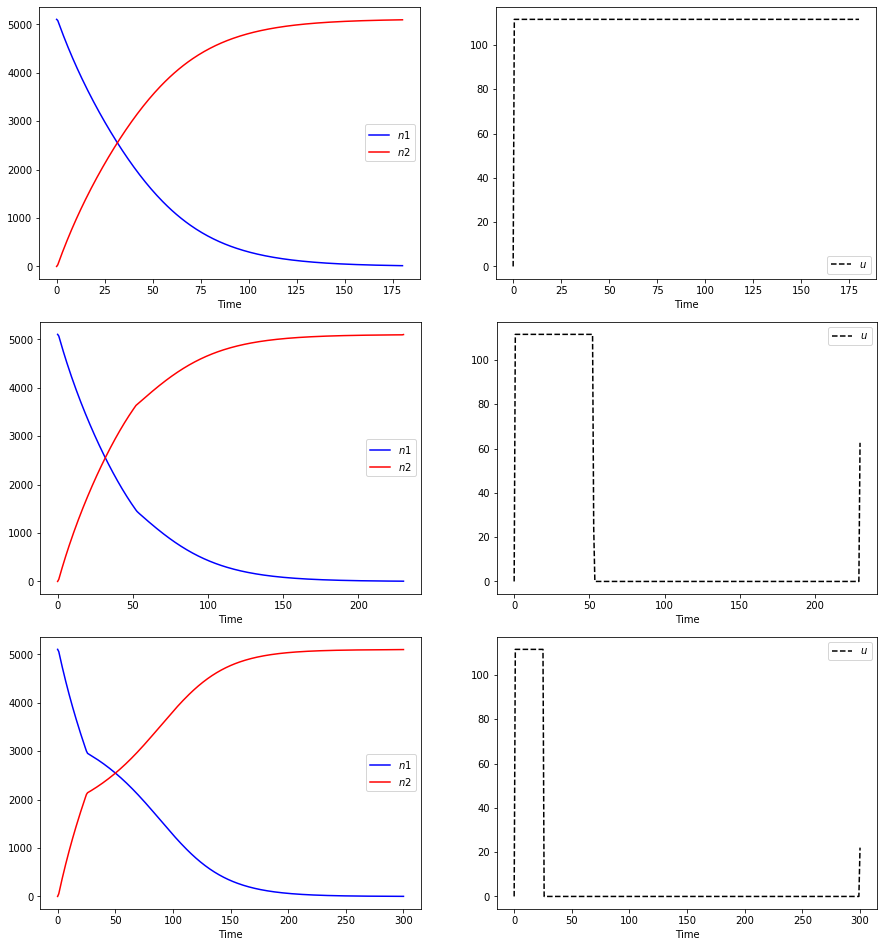}
    \caption{Simulation of the full problem~\eqref{full:CPbis} with $T$ fixed for $T=195$ (first row), $T=210$ (second row) and $T=250$ (third row). The time step in the ODEs discretization  is $\Delta t=T/300$.}
    \label{fig:2DTfixed}
\end{figure}

On Fig.~\ref{fig:2DTfixed}, we observe that if $T$ is not large enough to get close enough to the point $(0,n_2^*)$, then the control is the function $u$ equal to $M$ almost everywhere. When $T$ increases, the action is carried out in two stages: first, one has $u=M$ at least until the system enters the basin of attraction of the equilibrium point $(0,n_2^*)$, then $u=0$ to let the system evolve without using mosquitoes. The large $T$ is, the less it seems necessary to act. 
A possible explanation is that with only a little action, it is possible to enter the basin of attraction of $(0,n_2^*)$. Therefore, if $T$ is big, we can stop acting soon to decrease the amount of mosquitoes used. Instead, if $T$ is small, we need to release a lot of mosquitoes because otherwise, the system would not get close enough to $(0,n_2^*)$ alone.

\medskip

Finally, on Fig.~\ref{fig:2DTfree}, simulations are carried out for the full system \eqref{eq:fullsys}, by letting $T$ free and replacing the cost previous functional by
$$
u\mapsto (1-\alpha)\int_0^Tu(t)dt +\alpha T+\frac{1}{\varepsilon}
\max\left\{n_1(T)-10,n_2^*-10-n_2(T),0\right\},
$$ 
with $\alpha \in [0,1]$. 
\begin{figure}[h!]
    \centering
    \includegraphics[width=\textwidth]{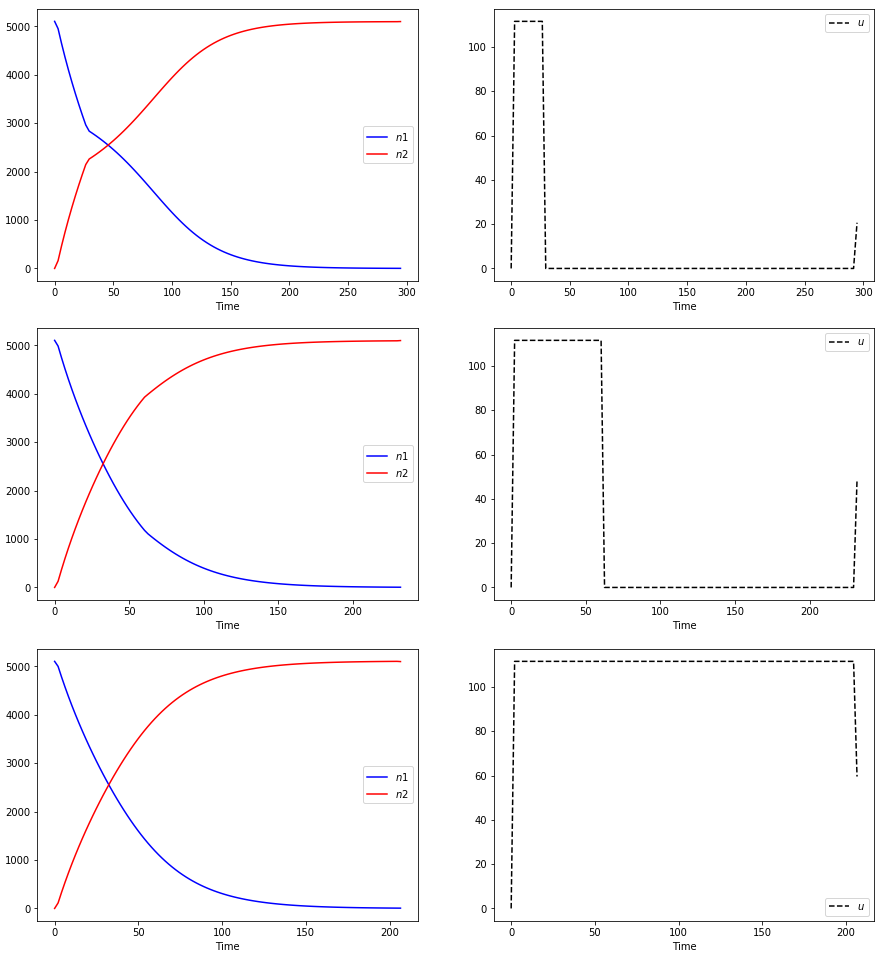}
    \caption{Simulation of the full problem with $T$ free for $\alpha=0.1$ (first row), $\alpha=0.5$ (second row) and $\alpha=0.9$ (third row).The number of points in the ODEs discretization  is $101$. }
    \label{fig:2DTfree}
\end{figure}
We see that the effect of increasing $\alpha$, and thus giving more importance in the horizon of time $T$, has the the same effect as decreasing $T$ in the case with $T$ fixed. In these simulations, the final times obtained are: $T=245.4$ for $\alpha=0.1$, $T=206.4$ for $\alpha=0.5$ and $T=190.9$ for $\alpha=0.9$. The results obtained are very similar to the the ones with $T$ fixed and very close to the duration during which the control is acting.

\appendix


\section{Existence of solutions for the problem \eqref{prob:Tfixed}}\label{app:existT}

The set of admissible controls for Problem~\eqref{prob:Tfixed} is
$$
\mathcal{D}=\left\{u\in\mathcal{U}_{T,M} \ , \ p(T)\geq\theta\right\}.
$$

Let us first prove that $\mathcal{D}$ is non-empty. This leads to investigate the assumptions under which one can ensure that constant controls $u(t)=\Bar{u}\mathbbm{1}_{[0,T]}$ belong to the set $\mathcal{D}$. 
Let us introduce $p_{\bar{u}}$ solving
$$
\begin{cases}
&  p_{\bar{u}}'=f(p_{\bar{u}})+\bar{u}g(p_{\bar{u}})\quad \text{in (0,T)},\\
& p_{\bar{u}}(0)=0.
\end{cases}
$$
By integrating in both sides of the differential equation, we get that the time it takes for $p_{\bar{u}}$ to reach the point $\theta$, called $T_{\bar{u}}$, is
$$
 T_{\bar{u}}= \int_{0}^{\theta}\frac{d\nu}{f(\nu)+\bar{u}g(\nu)} .
 $$

Note that $T_{\bar{u}}$ is finite if one imposes $\bar{u}>m^*$. Also by the fact that we want $\bar{u}\in\mathcal{U}_{T,M}$, we have that $\bar{u}\leq M$, so $\bar{u}\in]m^*,M]$. Finally since the final time $T$ is fixed, we need $T_{\bar{u}}\leq T$, and using that $\int_{0}^{\theta}\frac{d\nu}{f(\nu)+\bar{u}g(\nu)}$ is decreasing with respect to $\bar{u}$, we deduce that $T^*\leq T_{\bar{u}}$. Thus, we can conclude that $\mathcal{D}$ contains at least one constant control, and therefore is non-empty if, and only if, $T^*\leq T$, as we assumed.

Using now the fact that  $\mathcal{D}$ is non-empty, we consider a minimizing sequence $\left(u_n\right)_{n\in\mathbb{N}}\in\mathcal{D}^{\mathbb{N}}$ for the problem \eqref{prob:Tfixed}. We have $0\leq u_n \leq M$ a.e. for all $n\in\mathbb{N}$. Hence, the sequence $\left(u_n\right)_{n\in\mathbb{N}}$ is uniformly bounded. Also, since $\left(L^1(0,T)\right)'=L^{\infty}(0,T)$ and using the Banach-Alaouglu theorem, we conclude that $\mathcal{D}$ is weakly-* compact and therefore, up to a subsequence, $u_{n}\underset{n\to\infty}{\rightharpoonup^*} u^*$, i.e. $\left(u_n\right)_{n\in\mathbb{N}}$ converges for the weak-* topology of $L^{\infty}(0,T)$, and $0\leq u^*\leq M$ a.e., so that $u^*\in\mathcal{D}$. 

We now consider $\left(p_n\right)_{n\in\mathbb{N}}$ where $p_n$ solves  $p'_n=f(p_n)+u_ng(p_n)$ with $p_n(0)=0$. Using the fact that $f,g\in\mathcal{C}^{\infty}([0,1])$ 
and $0\leq p_n\leq 1$, we deduce that $(p'_n)_{n\in \N}$ is bounded in $L^{\infty}(0,T)$. 
Hence $p_n\in\mathcal{C}^{0}([0,1])$ and therefore, by using the Ascoli-Arzelá theorem, we conclude that up to a subsequence, $p_n \overset{\mathcal{C}^{0}}{\longrightarrow} p^*$ where $p^*\in W^{1,\infty}(0,T)$.

To conclude, 
since $u_{n}\underset{n\to\infty}{\rightharpoonup^*} u^*$, then  $\int_{0}^{T}\varphi u_n\to\int_{0}^{T}\varphi u^*$ for all $\varphi\in L^1(0,T)$.  In particular, 
for $\varphi:t\mapsto 1$, we have $\int_{0}^{T}1\cdot u_n\to\int_{0}^{T}1\cdot u^*$. Hence $J(u^*)=\lim\limits_{n\to\infty} J(u_n)=\inf\limits_{u\in\mathcal{D}} J(u)$, and therefore problem \eqref{prob:Tfixed} admits a solution.


\section{Existence of solutions for the problem \eqref{prob:Tfree}}

In order to simplify the study of the existence of solutions, and to avoid working on a variable domain, we make the following changes to variables. We define $\tilde{p}(s):=p(Ts)$ and $\tilde{u}(s):=u(Ts)$, $s\in[0,1]$.

Then, we are led to consider the problem
\begin{equation*}
\label{prob:varP}
\tag{$\tilde{\mathcal{P}}^{\alpha}_{M}$}
    \begin{cases}
    & \tilde{p}'(s)=T\left(f(\tilde{p}(s))+\tilde{u}(s)g(\tilde{p}(s))\right) \mbox{ , } \tilde{p}(0)=0 \mbox{ , } \tilde{p}(1)=\theta, \\
    & \displaystyle \inf_{\tilde u\in L^\infty(0,1;[0,M])}\tilde{J}(T,\tilde{u}),
    \end{cases}
\end{equation*}
where $\tilde{J}(T,\tilde{u})$ is defined by
$$
\tilde{J}(T,\tilde{u})=(1-\alpha) \ T\int_{0}^{1}\tilde{u}(s)ds + \alpha T .
$$
Since the solutions of this system are the same as those of the system we are interested in, we will study the existence of solutions for the new one.

Let us define the set of $(T,\tilde{u})$ verifying the constraints of System~\eqref{prob:varP}, i.e.
$$
\mathcal{D}:=\left\{\left(T,\tilde{u}\right)\in\mathbb{R}^+\times\mathcal{U}_{1,M}\times[0,1] \ | \ \tilde{p}(1)\geq \theta \right\}.
$$
This set is clearly non-empty (consider for instance $T=T^*$ and $u(\cdot)=M\mathbbm{1}_{[0,T^*]}(T^*\cdot)$). Consider a minimizing sequence $\left(T_n,\tilde{u}_n\right)_{n\in\mathbb{N}}\in\mathcal{D}^{\mathbb{N}}$ and let $\tilde p_n$ be associated solution of the ODE definig Problem~\eqref{prob:varP}. 

By minimality, one has $\lim\limits_{n\to\infty}\tilde{J}(T_n,\tilde{u}_n)<\infty$, i.e.
$$\lim\limits_{n\to\infty} \ (1-\alpha) \ T_n\int_{0}^{1}\tilde{u}_n(s)ds+\alpha T_n <\infty.$$
Each term of the sum being bounded from below by 0 is also bounded. Since $\alpha>0$, $(T_n)_{n\in \N}$ is bounded, therefore, up to a subsequence, $T_n\to \tilde T<\infty$. By mimicking the arguments used in Section~\ref{app:existT}, one shows that up to a subsequence, $(\tilde{u}_n)_{n\in \N}$ converges to $\tilde{u}^*\in \mathcal{U}_{\tilde T,M}$ weakly-star in $L^\infty(0,1;[0,M])$ and $(\tilde{p}_n)_{n\in \N}$ converges to $\tilde{p}^*$ in $\mathcal{C}^0([0,\tilde T])$, where $\tilde p^*$ solves the equation
$$
(\tilde p^*)'=f(\tilde p^*)+\tilde u^*g(\tilde p^*), \quad \text{in }(0,\tilde T)
$$
and $\tilde p^*(0)=0$. As a consequence, $(\tilde{J}(T_n,\tilde{u}_n))_{n\in \N}$ converges to $\tilde{J}(\tilde T,\tilde{u}^*)$, which concludes the proof.

\section*{Acknowledgement}
the authors are supported by the Project  ``Analysis and simulation of optimal shapes - application
to life science'' of the Paris City Hall.

\bibliographystyle{abbrv}
\bibliography{article}


\end{document}